\newcommand{\abs}[1]{\left\lvert #1\right\rvert}
\newcommand{\norm}[1]{\left\lVert #1\right\rVert}
\newcommand{\ffield}{\mathbb{F}_q(\!(t^{-1})\!)}
\newtheorem{theorem}{Theorem}[section]
\newtheorem{lemma}[theorem]{Lemma}
\newtheorem{corollary}[theorem]{Corollary}
\begin{document}
\title{A sum-product theorem in function fields}

\author{Thomas F. Bloom and Timothy G. F. Jones}
\begin{abstract}
Let $A$ be a finite subset of $\ffield$, the field of Laurent series in $1/t$ over a finite field $\mathbb{F}_q$. We show that for any $\epsilon>0$ there exists a constant $C$ dependent only on $\epsilon$ and $q$ such that $\max\left\{|A+A|,|AA|\right\}\geq C |A|^{6/5-\epsilon}$. In particular such a result is obtained for the rational function field $\mathbb{F}_q(t)$. Identical results are also obtained for finite subsets of the $p$-adic field $\mathbb{Q}_p$ for any prime $p$.
\end{abstract}	
\address{Department of Mathematics\\University of Bristol\\
University Walk\\Clifton\\ Bristol BS8 1TW\\United Kingdom}
\email{matfb@bristol.ac.uk and tgf.jones@bristol.ac.uk}
\thanks{The authors are both supported by an EPSRC doctoral training grant.}
\maketitle

\section{Introduction}

Let $A$ be a non-empty finite subset of a ring. Consider the sumset 

\[A+A=\left\{a+b:a,b \in A\right\}\]
and the product set 
\[AA=\left\{ab:a,b \in A\right\}.\]

It is conjectured that, provided $A$ does not contain `too many' zero-divisors, at least one of $A+A$ or $AA$ must be large, in some sense to be discussed later, whenever $A$ is not close to being a subring. In this paper we make progress in this direction in the settings of global function fields and the $p$-adic numbers.

In what follows we say that $\delta$ is permissible for a collection of sets $\mathcal{A}$ if for all $A\in\mathcal{A}$ and $\epsilon>0$
\[\max(|AA|, |A+A|)\gg_{\epsilon} |A|^{1+\delta-\epsilon}.\]
\cite{ErSz:1983} conjectured that $1$ is permissible for all finite sets of integers. The best result towards this to date is due to \cite{So:2009}, who showed that $1/3$ is permissible for all finite sets of real numbers. \cite{KoRu:2012} have recently extended this result to the complex numbers. Previous results in the real and complex settings can be found in \cite{El:1997,Fo:1998,Na:1997,So:2005a,So:2005b}.

Much has also been achieved in the finite field setting. In a finite field of prime order, \cite{BoKaTa:2004} proved the existence of an absolute constant $\delta>0$ which is permissible for all subsets of $\mathbb{F}_p$ that are not too close to being the entire field $\mathbb{F}_p$, for all primes $p$. More generally, \cite{Ta:2009} extended this result to general rings, covering all finite subsets that are not too close to being a subring. 

Garaev made the Bourgain-Katz-Tao result explicit in two cases: when $|A|>p^{2/3}$ \citep{Ga:2008} and when $|A|<p^{1/2}$  \citep{Ga:2007}. The former estimate is sharp, but the latter has been subsequently improved \citep[see][]{KaSh:2008a,Sh:2008,BoGa:2009,Li:2011}. The most recent result is that $1/11$ is permissible for all sets $A\subset\mathbb{F}_p$ such that $|A|<p^{1/2}$, due to \cite{Ru:2012}. \cite{LiRo:2011} built on a technique of \cite{KaSh:2008b} to extend this estimate to any finite field, not necessarily of prime order, so long as $A$ is not too close to being a subfield.

In this paper we consider sum-product estimates for $\ffield$, the field of Laurent series over a finite field $\mathbb{F}_q$. We shall show that for subsets of such fields $1/5$ is permissible. Our methods also work for the $p$-adic setting so that $1/5$ is also permissible for any prime $p$ and finite subset of $\mathbb{Q}_p$. In particular, combined with the previous results over $\mathbb{R}$, this implies that $1/5$ is permissible for all finite subsets of any local field.

We indicate possible applications to constructions in theoretical computer science below, for which it is likely only an appeal to the theorem over $\mathbb{F}_q[t]$ is necessary, which follows immediately since $\mathbb{F}_q[t]\subset\ffield$.

We will prove the following theorem. 

\begin{theorem}\label{theorem:functionfieldsumproduct}
For any finite $A\subset\ffield$ and any $\epsilon>0$ we have 

\[|A+A|^3|AA|^2 \gg_{\epsilon} q^{-1}|A|^{6-\epsilon}.\]
\end{theorem}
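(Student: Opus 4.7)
The target $|A+A|^3|AA|^2\gg q^{-1}|A|^{6-\epsilon}$ corresponds to $1/5$-permissibility via $\max\{|A+A|,|AA|\}^5\geq|A+A|^3|AA|^2$, so my plan is to adapt Solymosi's multiplicative-energy argument, replacing the linear order on $\mathbb{R}^\times$ used in the real case with the ultrametric tree structure on $\ffield^\times$ coming from the valuation.

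For each $s\in\ffield^\times$ write $L_s=\{(a,b)\in A\times A:a=sb\}$, so that $E^\times(A):=\sum_s|L_s|^2$ is the multiplicative energy and $\sum_s|L_s|=|A|^2$. Solymosi's algebraic key---that for distinct slopes $s\neq s'$ the pair-sum map $L_s\times L_{s'}\to(A+A)^2$ sending $((a_1,b_1),(a_2,b_2))$ to $(a_1+a_2,b_1+b_2)$ is injective, giving $|L_s||L_{s'}|\leq|A+A|^2$---is a field-theoretic identity (one solves $(s-s')(b_1-b_1')=0$) and so transfers verbatim to $\ffield$. The task is to combine these bounds without a linear order on slopes.

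Each nonzero $s\in\ffield$ has a unique Laurent expansion $s=c_dt^d+c_{d-1}t^{d-1}+\cdots$ with $c_i\in\mathbb{F}_q$ and $c_d\neq 0$, determining a path in a rooted tree of branching factor $q$. I would first pigeonhole the leading degree $d$ of $s$ (cost $|A|^\epsilon$ by Pl\"unnecke-Ruzsa applied to $A/A$ together with a dyadic refinement) and then the leading coefficient $c_d\in\mathbb{F}_q^\times$ (cost $q-1$, matching the factor $q^{-1}$ in the theorem). Within the resulting subclass of slopes sharing a top term, at each further level $k\geq 1$ I would impose an arbitrary linear order on $\mathbb{F}_q$ and apply Solymosi's injectivity to consecutive-at-level-$k$ pairs of slopes that agree on their first $k-1$ coefficients. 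Summing the contributions across the $O(\log_q|A|)$ levels, using that pair-sums from distinct nodes at the same level lie in disjoint additive cosets of $(A+A)^2$ by the ultrametric inequality, should yield an energy bound of the general shape $E^\times(A)\ll_\epsilon q^{1/2}|A+A|^{3/2}|A|^{1+\epsilon}$. Combined with the standard $E^\times(A)\geq|A|^4/|AA|$ and squared, this gives exactly $|A+A|^3|AA|^2\gg q^{-1}|A|^{6-\epsilon}$.

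The main obstacle will be this tree-level accounting, which is both the source of the $q^{-1}$ factor and of the gap between the $1/5$-permissibility established here and the $1/3$-permissibility Solymosi obtains over $\mathbb{R}$. A naive sum of Solymosi bounds across all $O(\log_q|A|)$ levels could accumulate losses of $|A|^{O(1)}$, and the Cauchy-Schwarz needed to aggregate injectivity bounds across the $q$-branching of the tree appears to lose a factor of roughly $|A|^{1/2}$ compared to the Archimedean case. A second delicate point, specific to the non-Archimedean setting, is handling the possible cancellation of leading coefficients when adding elements of $\ffield$ of equal valuation---a phenomenon absent over $\mathbb{R}$ that may force additional care in the recursion over tree levels.
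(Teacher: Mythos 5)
There is a genuine gap, and it sits exactly where you have placed a ``should'': the aggregation of the pairwise injectivity bounds over the tree. The pairwise statement $|L_s||L_{s'}|\leq|A+A|^2$ is indeed field-independent, but on its own it is no stronger than the Elekes bound; all of Solymosi's gain comes from summing it over consecutive slope pairs with \emph{disjoint} images in $(A+A)^2$. Over the positive reals this disjointness holds because the slope of $(a_1+a_2,b_1+b_2)$, namely $s+(s'-s)\frac{b_2}{b_1+b_2}$, lies strictly between $s$ and $s'$. In $\ffield$ there is no betweenness: even with no cancellation one only gets $\norm{b_2/(b_1+b_2)}\leq 1$, so the slope of the sum lies in the ball $B(s,\norm{s-s'})$ --- a ball which also contains $s'$ and every other slope at the same tree level under the same parent node, so the regions attached to distinct sibling pairs overlap heavily rather than being disjoint; and when cancellation does occur ($\norm{b_1+b_2}<\max(\norm{b_1},\norm{b_2})$, which fixing leading coefficients does not prevent, e.g.\ in characteristic $2$) the slope can escape the parent ball entirely. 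So the ``second delicate point'' you flag is not a technicality but the failure mode of the whole scheme, and the claimed intermediate bound $E^\times(A)\ll q^{1/2}|A+A|^{3/2}|A|^{1+\epsilon}$ is unsupported. This is precisely the obstruction the paper itself identifies: in the non-archimedean setting a single element can lie in as many as $|A|$ of the relevant balls, so the bounded-multiplicity covering that drives Solymosi's argument is simply unavailable. (Your preliminary pigeonholing on the leading degree of the slope is also not an $|A|^\epsilon$-loss event: the slope set can have $\approx|A|^2$ distinct degrees, so this step needs a real argument too.)

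The paper's proof circumvents this by a different mechanism rather than repairing the energy aggregation. It adapts Solymosi's \emph{complex} (2005) argument to show that if $|A+A|$ and $|AA|$ are both small then $A$ contains a long $A$-chain, i.e.\ a sequence of elements whose balls $B_A(a)=B(a,r_A(a))$ are nested (Lemma~\ref{theorem:chains}, via a good-quadruple count using the injectivity of $(a,b,c,d)\mapsto(a+c,b+c,ad,bd)$). A chain contains a separable subset of proportion at least $1/q$ --- this residue-field pigeonhole is the sole source of the $q^{-1}$ (Lemma~\ref{theorem:strict}) --- and a separable set $U$ has maximal iterated growth $|kU|\gg_k|U|^k$ because the nested-ball structure forbids nontrivial solutions to $k$-fold energy equations (Lemma~\ref{theorem:analyseseparable}). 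Combining with Pl\"unnecke's inequality $|kA|\ll_k|A+A|^k/|A|^{k-1}$ and letting $k\to\infty$ produces the $|A|^{-\epsilon}$. If you want to salvage your route, you would need a substitute for the missing disjointness; the paper's separable-set device is exactly such a substitute, turning the failure of bounded multiplicity (many nested balls through one point) into a structural asset rather than trying to force the Archimedean covering argument through.
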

A sum-product result for $\ffield$ follows immediately.
\begin{corollary}
For any finite $A\subset\ffield$ and any $\epsilon>0$ we have 

\[\max \left\{|A+A|,|AA|\right\}\gg_{\epsilon,q} |A|^{1+\frac{1}{5}-\epsilon}\]
\end{corollary}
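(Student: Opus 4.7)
The plan is essentially a one-line deduction from Theorem \ref{theorem:functionfieldsumproduct}. Let $M=\max\{|A+A|,|AA|\}$. Since both $|A+A|\le M$ and $|AA|\le M$, the left-hand side of the inequality in Theorem \ref{theorem:functionfieldsumproduct} is bounded above by $M^{3+2}=M^5$. So the theorem gives
\[
M^5 \;\ge\; |A+A|^3|AA|^2 \;\gg_\epsilon\; q^{-1}|A|^{6-\epsilon}.
\]

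Taking fifth roots yields $M \gg_\epsilon q^{-1/5}|A|^{(6-\epsilon)/5}$. The factor $q^{-1/5}$ is a constant depending only on $q$, so it may be absorbed into an implied constant that is allowed to depend on $q$ as well as on $\epsilon$. The exponent $(6-\epsilon)/5 = 6/5 - \epsilon/5$ can be written as $1+1/5-\epsilon'$ where $\epsilon'=\epsilon/5$; since $\epsilon>0$ is arbitrary, after relabelling $\epsilon'$ as $\epsilon$ we obtain
\[
\max\{|A+A|,|AA|\} \;\gg_{\epsilon,q}\; |A|^{1+\frac{1}{5}-\epsilon},
\]
as claimed.

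There is no obstacle here beyond the cosmetic matter of tracking dependencies of constants; the entire content of the corollary lies in Theorem \ref{theorem:functionfieldsumproduct}, and the corollary is just the observation that a product of powers of two quantities is at most the corresponding power of their maximum.
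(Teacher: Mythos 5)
Your deduction is correct and is exactly the ``follows immediately'' step the paper has in mind: bound $|A+A|^3|AA|^2\leq M^5$, apply Theorem~\ref{theorem:functionfieldsumproduct}, take fifth roots, absorb $q^{-1/5}$ into the $q$-dependent constant, and rescale $\epsilon$. Nothing is missing.
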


The sum-product exponent of $1/5$ for $\ffield$ lies between the $1/11$ known for finite fields and the $1/3$ known for $\mathbb{R}$. It is natural to conjecture that the correct answer for $\ffield$ is $1$.

We remark that the dependence on $q$ cannot be removed, since $\ffield$ contains $\mathbb{F}_q$ as a subfield, which is closed under both addition and multiplication. This contrasts with the finite field setting for sum-products, where we think of $q$ as being large, as it is an upper bound for the cardinality of our sets. Clearly any finite field sum-product result for which the constants depended on $q$ would be meaningless, since we could write everything as $O_q(1)$.

Aside from their intrinsic interest these theorems may have applications to constructions in theoretical computer science. Sum-product results over finite fields have seen recent applications to extractors; see for example \cite{Bo:2005}. A key idea in these previous applications is the observation that a string of $n$ bits, that is, an element of $\mathbb{F}_2^n$, can be interpreted as an element of the field $\mathbb{F}_{2^n}$ so that the full power of the sum-product machinery can be brought to bear. An alternative is to interpret it as an element of $\mathbb{F}_2[t]$. Given that the sum-product results now available in $\mathbb{F}_2[t]$ are better than those in $\mathbb{F}_{2^n}$ (with an exponent of $1/5$ rather than $1/11$) we expect that constructions along a similar line to those in \cite{Bo:2005} will be quantitatively stronger over $\mathbb{F}_2[t]$ rather than $\mathbb{F}_{2^n}$.

For number theory and geometric applications it is worth pointing out that any global function field, that is, a field of transcendence degree 1 over a finite field, can be embedded into $\ffield$ for some $q$, where $q$ depends only on the field of constants and genus of the function field. In particular Theorem~\ref{theorem:functionfieldsumproduct} will also hold for any finite subset of a global function field.

Finally, we remark that it is crucial to the result that the field of constants for $\ffield$ is finite. Obtaining a sum-product result in, say, $\mathbb{Q}(t)$ requires a different approach, and has been done with a non-explicit exponent by \cite{CrHa:2010} (whose methods would also likely lead to a sum-product result over $\mathbb{F}_q(t)$ but almost certainly with a weaker exponent).

The proof is written in the language of $\ffield$ for concreteness, but all that is required of the field we are working in is that is has a non-archimedean norm and furthermore has a finite residue field. In particular, the methods in this paper also prove the following sum-product theorem for the $p$-adic numbers $\mathbb{Q}_p$, where nothing was previously known. 

\begin{theorem}
For any prime $p$, finite $A\subset\mathbb{Q}_p$, and any $\epsilon>0$ we have 

\[|A+A|^3|AA|^2 \gg_{\epsilon,p} |A|^{6-\epsilon}.\]
\end{theorem}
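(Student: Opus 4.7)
The plan is to observe that the proof of Theorem~\ref{theorem:functionfieldsumproduct} will use only two structural features of $\ffield$, and then to verify that $\mathbb{Q}_p$ enjoys both. The first feature is a non-archimedean absolute value: on $\ffield$ this is $|f|=q^{\deg f}$, and on $\mathbb{Q}_p$ it is the usual $p$-adic absolute value $|\cdot|_p$. In both cases the ultrametric inequality $|x+y|\leq\max(|x|,|y|)$ holds, with equality whenever $|x|\neq|y|$, giving the field a tree-like ball structure in which two balls are either nested or disjoint. The second feature is a finite residue field: for $\ffield$ this is $\mathbb{F}_q[[t^{-1}]]/t^{-1}\mathbb{F}_q[[t^{-1}]]\cong\mathbb{F}_q$, while for $\mathbb{Q}_p$ it is $\mathbb{Z}_p/p\mathbb{Z}_p\cong\mathbb{F}_p$.

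Given these two features, the strategy is to run the argument underlying Theorem~\ref{theorem:functionfieldsumproduct} in the $p$-adic setting, substituting $p$ for $q$ throughout. Any appeal to Pl\"unnecke--Ruzsa-type inequalities transfers trivially, as these are statements about abelian groups. Each time the argument dyadically decomposes $A$ according to the scale of its elements, or covers a sumset by balls of a given radius, or counts elements sharing a given residue class, I will carry out the analogous operation using $p$-adic balls, with the residue-field parameter $q$ naturally becoming $p$. Any reduction that exploits the strong triangle inequality to split sums or products into essentially disjoint pieces will carry over identically.

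The main obstacle is likely to be nothing more than careful bookkeeping: I must verify that no step of the proof of Theorem~\ref{theorem:functionfieldsumproduct} exploits a feature of $\ffield$ that fails in $\mathbb{Q}_p$. The most obvious concern is positive characteristic---for example, any use of Frobenius, Artin--Schreier covers, or identities where cross-terms vanish mod $p$---but standard sum-product arguments use only the additive and multiplicative group structure together with the ultrametric, none of which depend on characteristic. Assuming this check goes through, the conclusion $|A+A|^3|AA|^2\gg_{\epsilon} p^{-1}|A|^{6-\epsilon}$ follows at once, and absorbing the $p^{-1}$ into the implied constant (since $p$ is fixed) yields the stated bound.
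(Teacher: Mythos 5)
Your proposal is correct and is essentially the paper's own argument: the paper proves the $\mathbb{Q}_p$ theorem precisely by remarking that the proof of Theorem~\ref{theorem:functionfieldsumproduct} uses only a non-archimedean norm and a finite residue field, so it transfers verbatim with $q$ replaced by $p$ (the residue field size entering only in Lemma~\ref{theorem:strict}) and the resulting factor $p^{-1}$ absorbed into the implied constant.
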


Finally, we remark that, if desired, the arguments that follow could be adapted to cover the case of partial sum and product sets, giving sum-product estimates in terms of the additive or multiplicative energy directly. For applications to exponential sums or theoretical computer science this approach via energy would be quantitatively stronger.

The approach used in this paper is based on a geometric argument used by \cite{So:2005b} in the complex numbers, coupled with some striking structural properties of non-archimedean geometry.

The next section provides more of the necessary background on function fields, and explains the structure of the proof and the rest of the paper. 
 
\section{Function fields}\label{section:ff}
Let $\ffield$ be the field of Laurent series over a finite field $\mathbb{F}_q$ with $q$ elements. Recall that elements of $\ffield$ have the shape
\[\sum_{i\leq N}a_it^i,\]
where $a_i\in \mathbb{F}_q$, for some $N\in\mathbb{Z}$. Throughout this paper $q$ will be reserved for the size of $\mathbb{F}_q$ and $p$ for the characteristic of $\mathbb{F}_q$, so that $q$ is a power of $p$.

There is a non-archimedean norm on $\ffield$ given by $\norm{x}=q^{\deg x}$ where $\deg$ is the familiar degree map; that is, if
\[x=\sum_{i\leq N}a_it^i\]
and $a_N\neq 0$ then we say that $\deg x=N$. We define $\norm{0}=0$. This has the crucial non-archimedean property
\[\norm{x+y}\leq\max(\norm{x},\norm{y})\]
and furthermore $\norm{x}=\norm{-x}$. Both of these properties will be used frequently without further mention in what follows. As a consequence of the non-archimedean property $\ffield$ has an unusually rigid geometry, which will be exploited when proving sum-product estimates. A particular concern will be the behaviour of balls, which are sets of the form
	\[B(x,r)=\left\{y \in \ffield : \norm{x-y}\leq r\right\}.\]
We will call $r\in\mathbb{R}$ the radius of the ball $B(x,r)$. The non-archimedean property implies the following result which is considered standard.

\begin{lemma}\label{theorem:nonarchball}
If $B_1$ and $B_2$ are balls in $\ffield$ then either they are disjoint, or $B_1 \subset B_2$, or $B_2 \subset B_1$. If in addition $B_1$ and $B_2$ have the same radius then either they are disjoint or $B_1=B_2$.
\end{lemma}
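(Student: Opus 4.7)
The plan is a direct appeal to the ultrametric inequality. Suppose $B_1=B(x_1,r_1)$ and $B_2=B(x_2,r_2)$ are two balls, and suppose they are not disjoint, so we may pick some $z\in B_1\cap B_2$. Without loss of generality assume $r_1\leq r_2$; I will show $B_1\subset B_2$.

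The key computation: for any $y\in B_1$, write $y-x_2=(y-x_1)+(x_1-z)+(z-x_2)$ and apply the non-archimedean bound twice, giving
\[\norm{y-x_2}\leq\max(\norm{y-x_1},\norm{x_1-z},\norm{z-x_2})\leq\max(r_1,r_1,r_2)=r_2,\]
so $y\in B_2$. This gives $B_1\subset B_2$, proving the first part. For the second part, if $r_1=r_2$ then the same argument applied with the roles of $B_1$ and $B_2$ reversed also yields $B_2\subset B_1$, hence $B_1=B_2$.

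There is no real obstacle here; the only subtlety to double-check is that the argument uses only $\norm{x+y}\leq\max(\norm{x},\norm{y})$ and $\norm{-x}=\norm{x}$, both of which have been stated, so the proof goes through unchanged in any non-archimedean normed field. The rigidity phenomenon being encoded is exactly that two balls in an ultrametric space are either nested or disjoint, and it is precisely this rigidity that the subsequent sum-product argument will exploit.
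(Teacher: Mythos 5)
Your proof is correct and follows essentially the same route as the paper: both arguments take a common point of the two balls and apply the ultrametric inequality (the paper in two steps, you in a single three-term application) to show the smaller-radius ball is contained in the other, with the equal-radius case following by symmetry. No gaps.
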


\begin{proof}
Let $B_1=B(x,r)$ and $B_2=B(y,s)$. If there exists $a \in B(x,r) \cap B(y,s)$ then 
	\[\norm{x-y}\leq \max\left\{\norm{a-x},\norm{a-y}\right\}\leq \max\left\{r,s\right\}.\]	
If $r \leq s$ then this implies $B(x,r)\subset B(y,s)$ since if $b \in B(x,r)$ then 
	\[\norm{y-b}\leq \max\left\{\norm{y-x},\norm{b-x}\right\}\leq \max\left\{r,s\right\}=s.\]
Conversely if $s \leq r$ then $B(y,s)\subset B(x,r)$. Hence if $r=s$ then $B(x,r)=B(y,s)$.
\end{proof}

The proof of Theorem~\ref{theorem:functionfieldsumproduct} builds upon an approach of \cite{So:2005b} for sum-products in $\mathbb{C}$. When adapting this method, the non-archimedean geometry of function fields turns out to be a mixed blessing. 

First, the bad news. Solymosi's argument fails at a critical point in the function field setting, for the following reason. For each $a \in A$, let $a' \in A \setminus\left\{a\right\}$ be such that $\norm{a-a'}$ is minimal, and let $B_a$ be the ball of radius $\norm{a-a'}$ centred on $a$. Solymosi's method uses the crucial fact that a single complex number can be contained in at most $O(1)$ of the $B_a$. This fails spectacularly in $\ffield$, where an element could be contained in as many as $|A|$ of the $B_a$, as demonstrated by the following example: let 
	\[A=\left\{t^j:0 \leq j \leq n\right\}\]
so that
	\[B_{t^j}=\left\{x \in \ffield:|x|\leq q^j\right\}\]
for $j\geq 1$ and $B_{1}=B_{t}$, meaning that every one of the $|A|$ balls contains $0$ as an element. 

But all is not lost. In the example above, the astute reader will notice that $|A+A|\approx |A|^2$, and so a strong-sum product estimate holds despite the failure of Solymosi's argument. In fact we will be able to show that something like this is possible whenever Solymosi's argument fails, by considering a special type of structure to be defined in the following section: separable sets.

Separability is fairly unexciting in the complex setting, as it holds trivially for any non-empty finite set, but in the non-archimedean regime of function fields it is a stronger notion. The rigid geometry makes it harder to find separable sets, but where they do exist it will in fact imply the existence of large sumsets. The idea, therefore, is to show that a large separable set must exist whenever Solymosi's argument fails. Combining this with an analysis of separable sets as having large sumsets will lead to a proof of Theorem~\ref{theorem:functionfieldsumproduct}. 

In what follows, Section~\ref{section:separable} analyses separable sets and shows that their sumsets have maximal growth. Section~\ref{section:solymosi} then adapts Solymosi's proof from \cite{So:2005b} to establish that if $|A+A|$ and $|AA|$ are both small then there must exist a large separable set, and uses this to prove Theorem~\ref{theorem:functionfieldsumproduct}. 

\section{Separable sets}\label{section:separable}

A finite set $A \subset \ffield$ is {\em separable} if its elements can be indexed as
	\[A=\left\{a_1,\ldots,a_{|A|}\right\}\]
in such a way that for each $1 \leq j \leq |A|$ there is a ball $B_j$ with 
	\[A \cap B_j =\left\{a_1,\ldots,a_j\right\}.\] 
	
\begin{lemma}\label{theorem:analyseseparable}
If $A\subset\ffield$ is a finite separable set then 
	\[|k A|\gg_k|A|^{k}\]
for any natural number $k$.
\end{lemma}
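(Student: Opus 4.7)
The plan is to exploit the separability structure together with the non-archimedean geometry to produce $\binom{|A|}{k}$ distinct elements in $kA$, all of the form $a_{i_1}+\cdots+a_{i_k}$ with strictly increasing indices.

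First I would translate so that $a_1=0$; this is harmless because $k(A+c)=kA+kc$ has the same cardinality as $kA$. Using Lemma~\ref{theorem:nonarchball}, the balls $B_1,\ldots,B_{|A|}$ provided by separability cannot be disjoint (they all contain $a_1$) and must therefore be nested, and since $a_{j+1}\in B_{j+1}\setminus B_j$ the nesting is strict: $B_1\subsetneq B_2\subsetneq\cdots\subsetneq B_{|A|}$. Because every point of a non-archimedean ball is a centre, we may write $B_j=B(0,r_j)$ for a strictly increasing sequence of radii $r_j$. This gives the key structural fact that after translation
\[0=\norm{a_1}<\norm{a_2}<\cdots<\norm{a_{|A|}},\]
since $a_i\in B_j$ for $i\le j$ and $a_{j+1}\notin B_j$ force $\norm{a_{j+1}}>r_j\ge\norm{a_j}$.

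Next, I would restrict attention to sums $a_{i_1}+\cdots+a_{i_k}$ with $i_1<i_2<\cdots<i_k$; there are exactly $\binom{|A|}{k}$ of these. For such a sum the summand $a_{i_k}$ has norm strictly greater than every other summand, so the strict form of the ultrametric inequality gives
\[\norm{a_{i_1}+\cdots+a_{i_k}}=\norm{a_{i_k}}.\]
I would then show these $\binom{|A|}{k}$ sums are all distinct by induction on $k$. The base case $k=1$ is immediate. For the inductive step, suppose $a_{i_1}+\cdots+a_{i_k}=a_{l_1}+\cdots+a_{l_k}$ with both index sequences strictly increasing; the norm identity just established forces $\norm{a_{i_k}}=\norm{a_{l_k}}$ and hence $i_k=l_k$, after which cancelling the common summand reduces to the case $k-1$.

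Combining these two steps yields $|kA|\ge\binom{|A|}{k}\gg_k|A|^k$ whenever $|A|\ge k$, and the case $|A|<k$ is trivial after absorbing the constant. The only part that requires care is avoiding repeated indices in the sums: in characteristic $p$, a summand of the form $p\cdot a_j$ vanishes, which would break the ultrametric argument if one tried to use multisets; restricting to strictly increasing indices sidesteps this issue entirely, and is the main obstacle worth flagging.
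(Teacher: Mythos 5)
Your argument is correct, and it takes a genuinely different route from the paper. You extract from separability the structural fact that, after translating by $a_1$ (harmless, since balls and sumset cardinalities are translation-invariant, and recentring $B_j$ at $a_1$ is legitimate by Lemma~\ref{theorem:nonarchball}, every point of a ball being a centre), the norms $\norm{a_1}<\norm{a_2}<\cdots<\norm{a_{|A|}}$ are strictly increasing along the separable ordering; the dominant-term ultrametric identity then makes the map from $k$-element index sets $i_1<\cdots<i_k$ to the sums $a_{i_1}+\cdots+a_{i_k}$ injective, giving $|kA|\geq\binom{|A|}{k}\gg_k|A|^k$ directly. The paper instead bounds the $k$-fold additive energy: it shows every solution of $a_1+\cdots+a_k=b_1+\cdots+b_k$ is trivial by gathering terms into $n_1c_1+\cdots+n_tc_t=0$ with $\sum n_i=0$ and some $|n_j|=1$, using a separating ball to reach an ultrametric contradiction, and then deduces the sumset bound via Cauchy--Schwarz. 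The trade-off: the paper's route yields the stronger statement $E_k(A)\ll_k|A|^k$, which is what one would want for the energy-based refinements mentioned in the introduction, whereas your route is more elementary and explicit (no energy, no Cauchy--Schwarz, an exact count $\binom{|A|}{k}$), and, as you rightly flag, restricting to distinct increasing indices sidesteps the characteristic-$p$ multiplicity issues that the paper has to handle through the coefficients $n_i$.
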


\begin{proof}
Let $E_k(A)$ denote the $k$-fold additive energy of $A$, i.e. the number of solutions to
	\begin{equation}\label{eq:kenergy}
	a_1+\ldots+a_k=b_1+\ldots+b_k
	\end{equation}
with $a_i,b_i \in A$. For $x \in kA$ write $\mu(x)$ for the number of solutions to 
	\[x=a_1+\ldots+a_k.\]
By the Cauchy-Schwarz inequality
	\[|A|^{2k}= \left(\sum_{x \in kA}\mu(x)\right)^2\leq |kA|E_k(A)\] 
and so it suffices to show that $E_k(A)\ll_k |A|^k$. 

Say that a solution $(a_1,\ldots,b_k)$ to \eqref{eq:kenergy} is trivial if there are at most $k$ distinct elements of $A$ in $\{a_1,\ldots,b_k\}$. By elementary counting there are at most $O_k(|A|^k)$ trivial solutions, so it suffices to show that there are no non-trivial solutions.

Suppose for a contradiction that a non-trivial solution to \eqref{eq:kenergy} exists. Gathering terms gives an expression of the form 
	\begin{equation}\label{eq:energygathered}
	n_1c_1+\cdots +n_tc_t =0
	\end{equation}    
where the $c_i $ are distinct elements of $A$ and $n_i\in \mathbb{Z}\backslash\{0\}$. The assumption of non-triviality implies that $t \geq 2$. Additionally, note that
	\begin{equation}\label{eq:coeffsgathered}
	n_1+\cdots +n_t = 0.
	\end{equation}    
Indeed, after gathering terms on the left the different multiplicities $n_i$ must sum to zero, since there are the same number of terms on the left of \eqref{eq:kenergy} as on the right.

An easy calculation shows that since the solution is non-trivial there exists $n_j$ such that $\abs{n_j}=1$. Without loss of generality, let this be $n_1$. Since $A$ is separable and the $c_i$ are in $A$ we may relabel them if necessary to assume the existence of a ball $B(x,r)$ such that $c_1 \notin B$ but $c_2,\ldots,c_t \in B$. By \eqref{eq:energygathered},
	\begin{align*}
	\norm{c_1-x}&=\norm{n_1c_1-n_1x}\\
	&=\norm{n_2c_2+\cdots +n_tc_t + n_1x}.
	\end{align*}
By \eqref{eq:coeffsgathered} and the non-archimedean property it follows that
	\begin{align*}
	\norm{c_1-x}&=\norm{n_2(c_2-x)+\cdots +n_t(c_t-x)}\\
	&\leq \max \left\{\norm{c_2-x},\ldots,\norm{c_t-x}\right\}\\
	&\leq r
	\end{align*}
and hence $c_1 \in B(x,r)$ which is a contradiction, and the proof is complete.
\end{proof}

\section{Finding separable sets}\label{section:solymosi}

The goal in this section is to show that if the sumset and product set of a set $A$ are both small then $A$ must contain a large separable set. For this we adapt the argument of \cite{So:2005b} for complex sum-products discussed in Section~\ref{section:ff}. Note that all of the analysis remains in the $\ffield$ setting; indeed some of the facts of non-archimedean geometry deployed here are manifestly false in $\mathbb{C}$.

A couple of new definitions are required. For a finite set $A \subset \ffield$ and an element $a \in A$, define 
	\begin{align*}
	r_A(a)&=\min_{a'\in A\backslash\{a\}}|a-a'|\\
	B_A(a)&=B(a,r_A(a)).\\
	\end{align*}
Additionally, for any $n\geq 1$ we say that $C=(c_1,\ldots,c_n)\in A^n$ is an {\em $A$-chain} of length $n$ if $c_i\neq c_j$ for $1\leq i<j\leq n$ and 
	\[B_A(c_1)\subset \cdots \subset B_A(c_n).\]
We will be rather cavalier about using $C$ for both a chain, an ordered tuple of elements from $A$, and the unordered set of such elements; which is intended will be clear from the context. The following argument, a strengthened form of that found in \cite{So:2005b}, finds a large chain in $A$ as long as the sumset and partial product set are both small. If this condition were to fail then a suitable sum-product result would follow immediately.

\begin{lemma}\label{theorem:chains}
Let $A\subset\ffield$ be any finite set. Then $A$ contains an $A$-chain of cardinality at least \[\frac{\abs{A}^5}{2^7|A+A|^2|AA|^2(\log_2|A|)^3}.\]
\end{lemma}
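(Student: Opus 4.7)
The plan is to adapt Solymosi's argument \cite{So:2005b} for complex sum-products to the non-archimedean setting, using the tree structure of balls (Lemma~\ref{theorem:nonarchball}) in place of angular ordering of lines. First I would partition $A\times A$ into multiplicative lines through the origin: for each $\lambda\in A/A$, set $L_\lambda=\{(a,b)\in A\times A:b=\lambda a\}$ and $k_\lambda=|L_\lambda|$, so that $\sum_\lambda k_\lambda=|A|^2$. A dyadic pigeonhole on the sizes $k_\lambda$ isolates a parameter $K$ and a set $R\subset A/A$ with $k_\lambda\in[K,2K)$ for every $\lambda\in R$ and $|R|K\gg|A|^2/\log|A|$. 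The Plünnecke--Ruzsa inequality applied to the ratio $|AA|/|A|$ bounds $|R|\leq|A/A|\leq|AA|^2/|A|$.

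Next I would build a tree of nearest-neighbour balls in $R$: for each $\lambda\in R$, pick $\mu(\lambda)\in R\setminus\{\lambda\}$ minimising $\norm{\lambda-\mu(\lambda)}$ and set $D_\lambda=B(\lambda,\norm{\lambda-\mu(\lambda)})$. By Lemma~\ref{theorem:nonarchball} these balls are pairwise nested or disjoint. The key geometric observation, replacing Solymosi's ``parallelogram'' argument, is that for $(s,\lambda s)\in L_\lambda$ and $(s',\mu(\lambda) s')\in L_{\mu(\lambda)}$ the sum lies in $(A+A)\times(A+A)$ and satisfies
\[\lambda s+\mu(\lambda)s'=\lambda(s+s')+(\mu(\lambda)-\lambda)s',\]
so the ``slope'' $(\lambda s+\mu(\lambda)s')/(s+s')$ sits non-archimedeanly close to $\lambda$. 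Consequently the slope-balls attached to different $\lambda\in R$ inherit the nesting/disjointness of the $D_\lambda$, and a count of collisions among these sums produces a Solymosi-type bound relating $K$, $|R|$ and $|A+A|$.

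After two further dyadic pigeonholes (costing $\log|A|$ apiece), one over the common radii of the $D_\lambda$ and one to align the chain of nested slope-balls at a single scale, I would extract a nested chain of balls in $\ffield$ whose centres, pulled back via the correspondence $\lambda\leftrightarrow L_\lambda$, yield points $c_1,\ldots,c_n\in A$ with $B_A(c_1)\subsetneq\cdots\subsetneq B_A(c_n)$. Combining the Solymosi-type inequality with $|R|\leq|AA|^2/|A|$ and $|R|K\gg|A|^2/\log|A|$ and tracking the three pigeonhole losses then yields the claimed chain length $n\gg|A|^5/(|A+A|^2|AA|^2(\log_2|A|)^3)$ up to the explicit constant $2^7$.

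The main obstacle is the second paragraph: faithfully translating Solymosi's ``disjointness of parallelograms'' into the non-archimedean world, and verifying that the nested slope-balls one obtains genuinely descend to nested balls $B_A(c_i)$ for honest chain elements in $A$, rather than merely delivering a weaker direct sum-product estimate.
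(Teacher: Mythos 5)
Your sketch does not follow the paper's argument, and as it stands it has two genuine gaps, one of which you flag yourself. First, the ``key geometric observation'' fails in the non-archimedean setting: from $\lambda s+\mu(\lambda)s'=\lambda(s+s')+(\mu(\lambda)-\lambda)s'$ you want the slope $(\lambda s+\mu(\lambda)s')/(s+s')$ to lie in the ball $D_\lambda=B(\lambda,\norm{\lambda-\mu(\lambda)})$, but that requires $\norm{(\mu(\lambda)-\lambda)s'/(s+s')}\leq\norm{\mu(\lambda)-\lambda}$, i.e.\ $\norm{s'}\leq\norm{s+s'}$, and the non-archimedean inequality only gives $\norm{s+s'}\leq\max(\norm{s},\norm{s'})$ --- cancellation can make $\norm{s+s'}$ much smaller than $\norm{s'}$. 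So the sums need not inherit the nesting/disjointness of the $D_\lambda$, which is exactly the point (discussed in Section~\ref{section:ff}) where Solymosi's argument breaks in $\ffield$. Second, and more fundamentally, the lemma demands an \emph{$A$-chain}: elements $c_1,\ldots,c_n\in A$ whose nearest-neighbour balls $B_A(c_i)$ (distances measured inside $A$) are nested. Your construction lives in the ratio set $R\subset A/A$, and there is no mechanism by which nesting of the balls $D_\lambda$ around ratios pulls back to nesting of the balls $B_A(c_i)$ around elements of $A$; the correspondence $\lambda\leftrightarrow L_\lambda$ says nothing about nearest-neighbour distances within $A$. You name this as ``the main obstacle,'' but it is not a technical detail to be checked --- it is the content of the lemma, and without it you would at best recover a direct sum-product inequality of Solymosi type, which is precisely what the paper says cannot be done here.

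For comparison, the paper's proof never leaves $A$ and never uses ratios or Pl\"unnecke--Ruzsa (that appears only later, in deducing Theorem~\ref{theorem:functionfieldsumproduct}). It sets $N(a)$ to be the length of the longest $A$-chain ending at $a$, pigeonholes dyadically to find $A_{j_0}$ with $|A_{j_0}|\geq|A|/\log_2|A|$, and then double-counts ``good'' quadruples $(a,b,c,d)$ with $a\in A_{j_0}$, $b\in B_A(a)\cap A$, using the injectivity of $(a,b,c,d)\mapsto(a+c,b+c,ad,bd)$. The only non-archimedean inputs are that the set of $a$ with $v\in B_A(a)$ is itself a chain (hence has size at most $2^{j_0+1}$), and that two translated balls $B_A(a)+c$, $B_A(a')+c'$ with the same centre $a+c=a'+c'$ are nested; the three logarithms come from the single pigeonholing, via the cube of $|A_{j_0}|\geq|A|/\log_2|A|$. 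If you want to salvage your route, you would need to replace the slope-ball step by an argument that produces chains in $A$ itself; the paper's $N(a)$-and-quadruples device is the mechanism that does this.
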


\begin{proof}
For each $a \in A$ write $N(a)$ for the maximal length $N$ of an $A$-chain $C=(c_1,\ldots,c_{N})$ for which $c_{N}=a$. Note for future reference that 
	$$N(a) \leq \left|B_A(a) \cap A\right|$$ 
since if $C$ is such an $A$-chain then $C \subseteq A$ by definition and for each $c \in C$ we have $c \in B_A(c)\subseteq B_A(a)$.

It suffices to find $a \in A$ such that
	$$N(a)\geq \frac{\abs{A}^5}{2^7|A+A|^2|AA|^2(\log_2|A|)^3}.$$
Begin with a dyadic pigeonholing. For each $0 \leq j \leq \log_2 |A|$ define $A_j$ to be the set of $a \in A$ for which $2^j \leq N(a) < 2^{j+1} $. The $A_j$ partition $A$ and so
	\[\sum_{j=0}^{\log_2 |A|}|A_j|=|A|.\]
Hence there exists some $j_0$ for which $|A_{j_0}|\geq |A|/\log_2|A|$. We shall show that 
	$$2^{j_0}\geq \frac{\abs{A}^5}{2^7|A+A|^2|AA|^2(\log_2|A|)^3}.$$ 

To this end, say that a pair $(a,c)\in A \times A$ is additively good if 
	$$|(A+A)\cap (B_A(a)+c)|\leq \frac{2^{j_0+3}|A+A|}{|A_{j_0}|}$$	
and that $(a,d)\in A \times A$ is multiplicatively good if	
	$$|(AA)\cap (d\cdot B_A(a))| \leq \frac{2^{j_0+3}|AA|}{|A_{j_0}|}.$$	
Say that a quadruple $(a,b,c,d) \in A^4$ is good if
	\begin{eqnarray}
	&a \in A_j. \label{eq:con1} \\
	&b \in B_A(a) \cap  A. \label{eq:con2} \\
	&(a,c) \text{ is additively good.} \label{eq:con3} \\
	&(a,d) \text{ is multiplicatively good.} \label{eq:con4}
	\end{eqnarray}
	
Write $Q$ for the number of good quadruples. We shall bound $Q$ from below to obtain
	\begin{equation}\label{eq:tom1}
	Q \geq 2^{j_0-1} |A_{j_0}||A|^2
	\end{equation}
and bound it from above to obtain
	\begin{equation}\label{eq:tom2}
	Q \leq \frac{2^{2j_0+6}|A+A|^2|AA|^2}{|A_{j_0}|^2}. 
	\end{equation}
Comparing (\ref{eq:tom1}) and (\ref{eq:tom2}) will then give the required bound on $2^{j_0}$ since $|A_{j_0}|\geq |A|/\log_2|A|$. Let's first establish (\ref{eq:tom1}). For fixed $c \in A$ we have
	\begin{align*}
	\sum_{a \in A_{j_0}}|(A+A)\cap (B_A(a)+c)|&=\sum_{a \in A_{j_0}}\sum_{u \in A+A}\mathbf{1}_{u \in B_A(a)+c}\\
	&= \sum_{v \in A+A-c} \sum_{a \in A_{j_0}}1_{v \in B_A(a)}\\
	&= \sum_{v \in A+A-c}|C_{j_0}(v)|
	\end{align*}
where $C_{j_0}(v)$ is the set of $a \in A_{j_0}$ with $v \in B_A(a)$. 

Observe that the elements of $C_{j_0}(v)$ may be ordered to form an $A$-chain. This follows from Lemma \ref{theorem:nonarchball} since for any two $a,b \in C_{j_0}(v)$ we have $v \in B_A(a) \cap B_A(b)$ and so either $B_A(a) \subseteq B_A(b)$ or $B_A(b) \subseteq B_A(a)$. 

Now since $C_{j_0}(v) \subseteq A_{j_0}$ and $C_{j_0}(v)$ is an $A$-chain, there is an $a \in A_{j_0}$ for which 
	$$\left|C_{j_0}(v)\right| \leq N(a) \leq 2^{{j_0}+1}.$$ 
We therefore have
	$$\sum_{a \in A_{j_0}}|(A+A)\cap (B_A(a)+c)|\leq 2^{{j_0}+1}|A+A|$$
and hence 
	$$|(A+A)\cap (B_A(a)+c)|\leq \frac{2^{{j_0}+3}|A+A|}{|A_{j_0}|}$$
holds for at least $3|A_{j_0}|/4$ elements $a \in A_{j_0}$. So for fixed $c \in A$ there are at least $3|A_{j_0}|/4$ elements $a \in A_{j_0}$ for which $(a,c)$ is additively good.

By the same argument we may show that for fixed $d \in A$ there are at least $3|A_{j_0}|/4$ elements $a \in A_{j_0}$ for which $(a,d)$ is multiplicatively good.

Thus for any $c\in A$ and $d\in A$ there are at least $|A_{j_0}|/2$ elements $a \in A_{j_0}$ for which $(a,c)$ is additively good and $(a,d)$ is multiplicatively good, i.e. for which conditions \eqref{eq:con3} and \eqref{eq:con4} hold. Furthermore for each such $a \in A_{j_0}$ there are at least $2^{j_0}$ elements $b \in A$ for which condition \eqref{eq:con2} holds, since 
	$$2^{j_0} \leq N(a) \leq \left|B_A(a)\cap A\right|.$$ 
In total therefore,
	$$Q \geq |A|^2|A_{j_0}|2^{j_0-1}$$
which concludes the proof of \eqref{eq:tom1}.

We now prove \eqref{eq:tom2}. Note that the map
 $$(a,b,c,d)\mapsto (a+c,b+c,ad,bd)$$
is injective and so it suffices to bound the number of possibilities for this latter expression, subject to the constraint that $(a,b,c,d)$ is good. There are certainly at most $|A+A|$ possibilities for $a+c$ and at most $|AA|$ for $ad$, so it suffices to show that if these are fixed then there are at most $2^{j_0+3}|A+A|/|A_{j_0}|$ possibilities for $b+c$ and at most $2^{j_0+3}|AA|/|A_{j_0}|$ for $bd$.

First establish the bound on the number of $b+c$. Note that if 
	$$a+c=a'+c'$$
then either 
	$$B_A(a)+c \subseteq B_A(a')+c'$$ 
or 
	$$B_A(a')+c' \subseteq B_A(a)+c$$ 
since both sets are balls with the same centre $a+c$. 

As a consequence, if $G\subseteq A \times A $ is the set of additively good pairs $(a,c)$, then for any $x \in A\overset{G}+A$ there is a fixed additively good pair $(a_x,c_x)$ such that 
	$$B_A(a)+c \subseteq B_A(a_x)+c_x$$
whenever $a+c=x$ and $(a,c)$ is additively good. Thus if $a+c=x$ is the fixed first co-ordinate and $b+c$ is a possible second co-ordinate then since $b \in B_A(a) \cap A$ and $c \in A$ we have
	\begin{align*}
	b+c &\in \left(A+A\right) \cap \left(B_A(a)+c\right)\\
	& \subseteq \left(A+A\right) \cap \left(B_A(a_x)+c_x\right).
	\end{align*}
Since $(a_x,c_x)$ is additively good, there are, as required, at most $2^{j_0+3}|A+A|/|A_{j_0}|$ possibilities for $b+c$. The argument that there are at most at most $2^{j_0+3}|AA|/|A_{j_0}|$ for $bd$ is similar. 

In total therefore
	$$Q \leq \frac{2^{2j_0+6}|A+A|^2|AA|^2}{|A_{j_0}|^2}$$ 
which concludes the proof of (\ref{eq:tom2}) and thus of the lemma.
\end{proof}

The following result shows that any chain contains a large separable subset, allowing Lemma~\ref{theorem:analyseseparable} to be applied to the large chain found in Lemma~\ref{theorem:chains}. 

\begin{lemma}\label{theorem:strict}
If $C$ is the set of elements of an $A$-chain for some $A\subset\ffield$ then $C$ contains a separable set of cardinality at least $|C|/q$.
\end{lemma}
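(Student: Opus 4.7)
The plan is to group the chain elements by their $B_A$-ball, to observe that each such group has at most $q$ elements thanks to $\ffield$ having residue field of size exactly $q$, and then to extract a separable set by taking one representative from each group.

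First I would partition $C$ into blocks $G_1,\ldots,G_k$, where $G_\ell = \{c \in C : B_A(c) = B_\ell\}$ and $B_1 \subsetneq \cdots \subsetneq B_k$ is the sequence of distinct balls appearing along the chain. This is well-defined because the chain condition $B_A(c_1)\subseteq\cdots\subseteq B_A(c_n)$ together with Lemma~\ref{theorem:nonarchball} totally orders these balls by containment.

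The key step would be to show $|G_\ell|\leq q$. If $c,c'\in G_\ell$ are distinct, then both lie in $B_\ell$, so $\norm{c-c'}\leq r_\ell$ where $r_\ell$ denotes the radius of $B_\ell$; on the other hand $\norm{c-c'}\geq r_A(c)=r_\ell$ by definition of $r_A$, so these two elements are at exact distance $r_\ell$. Since nonzero norms in $\ffield$ are integer powers of $q$, one has $r_\ell=q^m$ for some $m\in\mathbb{Z}$, and $B_\ell$ decomposes as a disjoint union of $q$ sub-balls of radius $q^{m-1}$; two points at distance exactly $q^m$ must lie in different sub-balls. Hence $|G_\ell|\leq q$, and so $k\geq |C|/q$. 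I would then pick one representative $c_{i_\ell}\in G_\ell$ from each block to form $S=\{c_{i_1},\ldots,c_{i_k}\}$ with this ordering and take $B_A(c_{i_\ell})$ as the $\ell$-th separating ball: the forward inclusion $\{c_{i_1},\ldots,c_{i_\ell}\}\subseteq S\cap B_A(c_{i_\ell})$ is immediate from the nesting $B_1\subseteq\cdots\subseteq B_\ell$, and conversely for $m>\ell$ one has $\norm{c_{i_m}-c_{i_\ell}}\geq r_A(c_{i_m})=r_m > r_\ell$ (strictly, because $B_\ell\subsetneq B_m$ are balls with radii powers of $q$), so $c_{i_m}\notin B_\ell$.

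The main obstacle is really the bound $|G_\ell|\leq q$: this is where the finiteness of the residue field enters in an essential way, and it is what would fail in a field like $\mathbb{Q}(t)$ with its usual non-archimedean valuation. Everything else is bookkeeping built on Lemma~\ref{theorem:nonarchball}.
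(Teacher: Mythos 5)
Your proof is correct and takes essentially the same route as the paper: you partition the chain into classes with $B_A(c)=B_A(c')$, bound each class by $q$ using the finiteness of the residue field, and note that representatives of the (strictly nested) distinct balls form a separable set of size at least $|C|/q$. The only cosmetic difference is that you run the pigeonhole geometrically, via the $q$ disjoint sub-balls of radius $q^{m-1}$ inside a ball of radius $q^{m}$, whereas the paper pigeonholes the leading coefficients of the differences $a_1-a_i$; these are the same observation in different language.
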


\begin{proof}
It is clear that any subset $\left\{c_1,\ldots,c_n\right\} \subset C$ with
	\[B_A(c_1)\subsetneq \ldots \subsetneq B_A(c_n).\]
is separable. Define an equivalence relation on elements of $A$ by $a \sim b$ if and	 only if $B_A(a)=B_A(b)$. To prove the lemma it suffices to show that each equivalence class contains at most $q$ elements of $A$. 

Note first that if $a \sim b$ and $a\neq b$ then 
	\[\norm{a-b}=r_A(a)=r_A(b).\]
Indeed, since $B_A(a)=B_A(b)$ it follows that $b \in B_A(a)$ and so $\norm{a-b}\leq r_A(a)$. However by minimality, $\norm{a-b}\geq r_A(a)$ and so $\norm{a-b}=r_A(a)$. Similarly $\norm{a-b}=r_A(b)$. 

Suppose for a contradiction that there is an equivalence class containing elements $a_1,\ldots,a_{q+1}$. Consider the differences $a_1-a_i$ for $2 \leq i \leq q+1$. By the last paragraph we have
	\begin{equation}\label{normeq}\norm{a_i-a_j}=r_A(a_i)=r_A(a_j)\end{equation}
for all $1\leq i< j\leq q+1$. To complete the proof it suffices to show that there are distinct $i$, $j$ and $k$ in $\{1,\ldots,q^d+1\}$ such that
\begin{equation}\label{degrees}\deg(a_i-a_j)<\deg(a_i-a_k),\end{equation}
as this yields a contradiction to \eqref{normeq} by the definition of the norm $\norm{\cdot}$. 

Consider the $q$ distinct elements $a_1-a_i$ for $2\leq i\leq q+1$. Since none of these are $0$ they have leading coefficient belonging to $\mathbb{F}_q\backslash\{0\}$, and hence by the pigeonhole principle there are $i\neq j$ such that $a_1-a_i$ and $a_1-a_j$ have the same leading coefficient. 

It follows that if $\deg(a_1-a_i)=\deg(a_1-a_j)$ then $\deg(a_i-a_j)<\deg(a_1-a_j)$. Hence either way there is a choice of distinct $i$, $j$ and $k$ such that \eqref{degrees} holds, completing the proof. 
\end{proof}
Theorem~\ref{theorem:functionfieldsumproduct} follows by combining Lemma~\ref{theorem:analyseseparable} with Lemmata~\ref{theorem:chains} and \ref{theorem:strict} as follows.

\begin{proof}[Proof of Theorem~\ref{theorem:functionfieldsumproduct}]
Let 
	\[K= \frac{|A|^5}{q|A+A|^2|A\cdot A|^2\log^3|A|}.\]
By Lemmata~\ref{theorem:chains} and \ref{theorem:strict} the set $A$ contains a separable subset $U$ of cardinality $\Omega(K)$. For any natural number $k$, Lemma~\ref{theorem:analyseseparable} implies that
	\begin{align*}	
	|kA|&\geq |kU|\gg_{k} K^{k}
	\end{align*}
By Pl\"unnecke's inequality (see \cite{Pe:2011} for a recent simple proof) we know that $|kA|\ll_k \frac{|A+A|^k}{|A|^{k-1}}$ and so, after taking $k$-th roots,
	\begin{align*}
	|A+A|&\gg_k |A|^{1-1/k}K\\
	& = \frac{|A|^{6-1/k}}{q|A+A|^2|A\cdot A|^2\log^3 |A|}
	\end{align*}
Theorem~\ref{theorem:functionfieldsumproduct} follows by taking $k$ sufficiently large.
\end{proof}

\section*{Acknowledgements}
The authors would like to thank Oliver Roche-Newton, Misha Rudnev, and Trevor Wooley for reading earlier drafts of this paper and many helpful comments and corrections.

\end{document}